\documentclass{amsart}
\usepackage{graphicx} 
\usepackage{hyperref}
\usepackage[alphabetic]{amsrefs}
\newtheorem{theorem}{Theorem}

\title{The $\theta$ invariant recovers the Rozansky-Overbay invariant}
\author{Ramana Murugesan}
\date{March 2026}
\address{Department of Mathematics\\
  University of Toronto\\
  Toronto Ontario M5S 2E4\\
  Canada}
\email{ramana@math.toronto.edu}

\begin{document}

\maketitle

Let $K$ be a knot. \cite{pai} wrote formulas for the Rozansky-Overbay knot
invariant $\rho_1(K)\in\mathbf C[T^{\pm1}]$ defined in \cites{Ro1,Ro2,Ro3,Ov}.
Later, \cite{theta} define the invariant $\theta(K)\in\mathbf C[T_1^{\pm1},T_2^{\pm1}]$.
In this paper we show that the $\theta$ invariant is stronger than the Rozansky-Overbay invariant,
answering \cite[Conjecture 24]{theta}:
\begin{theorem}\label{thm:main}
    Let $K$ be a knot. Then $\rho_1(K)=-\theta(K)|_{T_1\leftarrow 1,T_2\leftarrow T}$
\end{theorem}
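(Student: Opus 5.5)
Both invariants are given by explicit state-sum formulas over a rotational (upright) knot diagram. The plan is therefore to compare the two formulas term by term after the substitution $T_1\leftarrow 1$, $T_2\leftarrow T$.

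\textbf{Setup.} Fix a rotational diagram $D$ of $K$ with $n$ crossings, labelled edges $1,\dots,2n+1$, crossings $c=(s,i,j)$ with sign $s$, and rotation numbers $\varphi_k$. Write $\Delta$ for the Alexander polynomial. Following \cite{pai}, $\rho_1(K)$ is $\Delta^{2}$ times a sum of
\begin{itemize}
\item a quadratic crossing term $\sum_c R_1(c)$, where $R_1(c)$ is built from entries of the Green's function $g_{\alpha\beta}$ (the inverse of the $T$-dependent Alexander-type matrix $A=I-\text{(crossing contributions)}$);
\item a rotation term $\sum_k \varphi_k(g_{kk}-1/2)$, suitably normalised.
\end{itemize}
Following \cite{theta}, $\theta(K)$ is $\Delta_1\Delta_2\Delta_3$ times a sum of
\begin{itemize}
\item a pairwise interaction term $\sum_{c_0,c_1}F_2(c_0,c_1)$, involving products of $g_1,g_2,g_3$, the Green's functions at $T_1$, $T_2$ and $T_3=T_1T_2$;
\item a single-crossing term $\sum_c F_1(c)$;
\item a rotation term $\sum_k F_0(\varphi_k,k)$.
\end{itemize}

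\textbf{Key steps, in order.}

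First, specialise at $T_1=1$. The Green's function $g_1$ at $T=1$ is explicit: $A|_{T=1}$ is unipotent upper-triangular in the edge ordering, and its inverse has $g_{\alpha\beta}|_{T=1}=[\beta>\alpha]$ up to the convention on the diagonal. Also $\Delta_1=\Delta(1)=1$, while $T_3=T$, so $g_3=g_2=g$ and $\Delta_2=\Delta_3=\Delta$. This already matches the prefactor $\Delta^2$ in $\rho_1$.

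Second, substitute these into $F_2$, $F_1$ and $F_0$. Every factor of $g_1$ becomes an indicator function of the edge ordering. The double sum $\sum_{c_0,c_1}F_2$ then collapses into sums over pairs of crossings restricted by $i_0<i_1$-type conditions, with coefficients that are products $g_{\alpha\beta}g_{\gamma\delta}$ at the single variable $T$.

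Third, eliminate these ordered double sums using the defining relation $gA=I$, i.e.\ $g_{\alpha\beta}=\delta_{\alpha\beta}+(\text{local crossing term})\,g_{\alpha^+\beta}$. Summed over the crossings, this relation telescopes any expression of the form $\sum_{c_1:\,i_1>\alpha}(g_{i_1^+\beta}-g_{i_1\beta})$ and similar ordered sums. After telescoping, the double sum reduces to a single sum over crossings, together with boundary contributions at the first and last edges, where $g$ is known explicitly.

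Fourth, compare the resulting single sums with $R_1$ and $-(\text{rotation term})$, and check that the boundary and constant pieces cancel. Ideally this is done symbolically for each crossing sign; alternatively, one reduces to identities among finitely many local expressions in $g$ and verifies them by the relation $gA=I$ once more. A sanity check on the unknot, the trefoil and the figure-eight (using the computed tables in \cite{theta} and \cite{pai}) fixes signs and normalisations, including the overall $-$ in the statement.

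\textbf{Main obstacle.} The hardest step is the third: collapsing the pairwise term $F_2$, which is genuinely global, into local crossing terms. My first attempt would be to organise $F_2$ by which of its $g_1$ factors becomes an indicator, group terms sharing the same $g_2g_3=g^2$ monomial, and then apply the row relation of $gA=I$ to the inner sum over $c_1$ with $c_0$ fixed. If the terms do not telescope cleanly, I would instead expand $\theta$ to first order in $T_1-1$ and use that the $T_1$-independent piece must be invariant under the Reidemeister moves. This would allow verifying the identity on a generating set of diagrams, but it needs more care to make rigorous.
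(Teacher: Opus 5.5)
Your overall frame is right: specialise, use $\Delta_1=\Delta(1)=1$, $g_2=g_3=g$ and $\Delta_2=\Delta_3=\Delta$, then compare using $AG=GA=I$. But the plan is built around an obstacle that does not exist, and the step that actually carries the proof is never supplied.

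The pair term $F_2(c_0,c_1)$ contains the explicit factor $(T_1^{s_0}-1)$. Its denominator is harmless: at $T_1=1$ one has $T_3=T_2$, so $(T_3^{s_1}-1)/(T_2^{s_1}-1)=1$, and no entry of $g_1,g_2,g_3$ has a pole there. Hence $F_2\equiv 0$ after the substitution. There is no ordered double sum to telescope, so your second and third steps, and your ``main obstacle'', rest on a false premise.

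The same factor $(T_1^s-1)$ kills every $g_{1ji}$-term in $F_1$. The only entries of $g_1$ that survive are the diagonal ones, and their value is not a matter of convention: $A|_{T=1}=I-\sum_{k=1}^{2n}E_{k,k+1}$, whose inverse has all entries on and above the diagonal equal to $1$, so $g_{1ii}=1$. The rotation terms then agree exactly, and the theorem reduces to the per-crossing identity $R_1(c)=-F_1(c)|_{T_1\leftarrow1,T_2\leftarrow T}$.

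That identity is the whole proof, and your fourth step only gestures at it. The missing idea is that $R_1(c)$ involves $g_{j+1,j}$ and $g_{j,j+1}$, which do not occur in $F_1$. These are eliminated by two specific entries of the inverse relations:
\begin{itemize}
\item row $j$ of $A$ is $e_j-e_{j+1}$, so $(AG)_{jj}=1$ gives $g_{j+1,j}=g_{jj}-1$;
\item column $j+1$ of $A$ is $e_{j+1}-e_j-(1-T^s)e_i$, so $(GA)_{j,j+1}=0$ gives $g_{j,j+1}=g_{jj}+(1-T^s)g_{ji}$.
\end{itemize}
Substituting,
\[
R_1(c)=s\bigl[(1-T^s)g_{ji}^2+g_{ji}(2g_{jj}-1-g_{ij})+g_{ii}\bigl(1-g_{jj}-(1-T^s)g_{ji}\bigr)-\tfrac12\bigr],
\]
which is exactly $-F_1(c)$ after specialisation.

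In particular, the overall minus sign comes out of this local computation. It should not be fixed by checking the unknot, trefoil and figure-eight. Your fallback of verifying the identity on finitely many diagrams would not be a proof either, since two knot invariants agreeing on finitely many knots says nothing about all knots.
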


We begin by recalling one definition of the Alexander polynomial given in \cite{pai}.
Consider
an upright knot diagram $D$, which is an oriented long knot diagram so that at each crossing,
both overlapping strands point upwards. Every knot admits such a projection, since
crossings can be rotated to make them upright. Suppose we have such a diagram with
$n$ crossings. We start at the initial edge and label the edges $k=1,\ldots,2n+1$ along
the orientation of the knot.
    For each edge $k$, its \textbf{geometric rotation number} $\varphi_k$ is defined as the signed
    number of times that the tangent to $k$ is horizontal, where cups have positive sign and
    caps have negative sign.

Then, we denote each crossing $c$ by the tuple $(s,i,j)$ consisting of
the sign of $c$, the incoming 
overcrossing edge, and the incoming undercrossing edge respectively, and
define $X=\{c=(s,i,j):\text{ $c$ crossing}\}$. Letting $T$ be an indeterminate,
define the $(2n+1)\times(2n+1)$ matrix of Laurent polynomials
\begin{equation}
    A=I-\sum_{c\in X} (T^s E_{i,i+1}+(1-T^s)E_{i,j+1}+E_{j,j+1}) \label{A}
\end{equation}
where $E_{pq}$ is the elementary matrix with entry $1$ at $(p,q)$ and $0$ elsewhere.

Note that the normalized Alexander
polynomial is given by \[\Delta(K):=T^{(-\varphi(D)-w(D))/2}\det(A)\] where
$\varphi(D)=\sum_k \varphi_k$ and $w(D)=\sum_c s_c$ (where $s_c$ is the sign of $c$.)

Now we let $G=(g_{\alpha\beta}):=A^{-1}$. For a crossing $c$ define
\[R_1(c)=s(g_{ji}(g_{j+1,j}+g_{j,j+1}-g_{ij})-g_{ii}(g_{j,j+1}-1)-1/2)\]
Then $\rho_1(D)\in\mathbf C[T^{\pm1}]$ is defined by 
\[\rho_1(D):=\Delta^2\left(\sum_c R_1(c)-\sum_k\varphi_k(g_{kk}-1/2)\right)\]
It is shown in \cite{pai} that this is invariant under the Reidemeister-type moves of
an upright knot diagram and so this defines a knot invariant $\rho_1(K)$.

Now to define the $\theta$ invariant, take further
indeterminates $T_1,T_2$, and $T_3=T_1T_2$ and let $\Delta_i:=\Delta|_{T\leftarrow T_i}$ and
$G_i=(g_{i\alpha\beta}):=G|_{T\leftarrow T_i}$. For all crossings $c,c_0,c_1$, and edges $k$, define
\begin{align*}
    F_1(c) &=\  s
    \left[1/2 - g_{3ii} +  T_2^s g_{1ii} g_{2ji} - T_2^s g_{3jj} g_{2ji}
      - (T_2^s-1) g_{3ii} g_{2ji} \right. \\
  \nonumber &\quad
    \left. + (T_3^s-1) g_{2ji} g_{3ji} - g_{1ii} g_{2jj} + 2 g_{3ii} g_{2jj}
      + g_{1ii} g_{3jj} - g_{2ii} g_{3jj} \right] \\
  \nonumber &\quad
  + \frac{s}{T_2^s-1}
    [
      (T_1^s-1)T_2^s \left( g_{3jj} g_{1ji} - g_{2jj} g_{1ji} + T_2^s g_{1ji} g_{2ji} \right) \\
  \nonumber &\quad
      + (T_3^s-1) g_{3ji} ( 1 - T_2^s g_{1ii} + g_{2ij} + (T_2^s-2) g_{2jj} 
         (T_1^s-1) (T_2^s+1) g_{1ji} ]\\
    F_2(c_0,c_1)&=\frac{s_1 (T_1^{s_0}-1) (T_3^{s_1}-1) g_{1j_1i_0} g_{3j_0i_1}}{T_2^{s_1}-1}(T_2^{s_0} g_{2i_1i_0}+g_{2j_1j_0} - T_2^{s_0} g_{2j_1i_0}
    -g_{2i_1j_0} )\\
    F_3(k)&=\varphi_k(g_{3kk}-1/2)
\end{align*}
Then $\theta(D)\in \mathbf C[T_1^{\pm1},T_2^{\pm1}]$ is defined by
\[\theta(D):=
\Delta_1\Delta_2\Delta_3\left(\sum_c F_1(c)+\sum_{c_0,c_1}F_2(c_0,c_1)+\sum_k F_3(k)\right)\]
Although it is only obvious from the formula that $\theta(D)$ is a rational
function, \cite{theta} show that the denominators of the form
$(T_2^s-1)$ in $F_2$ vanish in the summation above, so $\theta(D)$ is a Laurent polynomial. They also show
that $\theta(D)$ is invariant under Reidemeister-type moves, and so defines a knot invariant
$\theta(K)$.

We now prove the theorem.
\begin{proof}[Proof of Theorem \ref{thm:main}]
    Let $D$ be a upright knot diagram representing $K$.
    Since $T_3=T_1T_2=T$ we have
\begin{align*}g_{ii}&=g_{2ii}=g_{3ii}\\
\Delta&=\Delta_2=\Delta_3\\
\Delta_1&=\Delta|_{T\leftarrow1}=1
\end{align*}
where the last equality comes from the normalization of $\Delta$. Since $F_2\equiv0$
and $F_3(k)=\varphi_k(g_{kk}-1/2)$ 
it suffices to show that $R_1(c)=-F_1(c)$ for each $c=(s,i,j)$.
With the above equations,
\begin{equation*}
    F_1(c)=\frac{s}{2}[1+2(-1+T^s)g^2_{ji}+2g_{ji}(1+g_{ij}-2g_{jj})+2g_{ii}(-1-(-1+T^s)g_{ji}+g_{jj})] \label{eq:f1}
\end{equation*}

To show that this equals $-R_1(c)$, we need to eliminate the
$g_{j,j+1}$ and $g_{j+1,j}$ terms in $R_1(c)$.
Recall that $G=A^{-1}$. Multiplying \eqref{A} by $G$,
\begin{align*}
1&=(AG)_{jj}=g_{jj}-g_{j+1,j}\\
0&=(GA)_{j,j+1}=g_{j,j+1}-(1-T^s)g_{ji}-g_{jj}\end{align*}
Thus:
\begin{align*}
    R_1(c)&=s\left[\vphantom{\frac12}g_{ji}((g_{jj}-1)+(g_{jj}+(1-T^s)g_{ji})-g_{ij})\right.\\
    &\quad-g_{ii}(g_{jj}
    \left.+(1-T^s)g_{ji}-1)-\frac{1}{2}\right]\\
        &= \frac{s}{2}[-1+2(1-T^s)g_{ji}^2+2g_{ii}(-g_{jj}-(1-T^s)g_{ji}+1)\\
        &\quad+2g_{ji}(2g_{jj}-1-g_{ij})]\\
        &= -F_1(c)
\end{align*}
\end{proof}
\section*{Acknowledgements}
The author thanks Dror Bar-Natan for suggesting this problem and helpful feedback.
\bibliography{refs}
\end{document}